\documentclass{amsart}%
\usepackage{amssymb,amsmath,mathtools,xcolor,graphicx,xspace,colortbl,ragged2e,rotating}
\usepackage{amsfonts}
\usepackage{amsmath}
\usepackage{tabulary}
\usepackage{wrapfig}
\usepackage{extarrows}
\usepackage{amssymb}
\usepackage{graphicx}%
\providecommand{\U}[1]{\protect\rule{.1in}{.1in}}
\graphicspath{{untitled3_graphics/}{untitled3_tcache/}{untitled3_gcache/}}
\DeclareGraphicsExtensions{.pdf,.eps,.ps,.png,.jpg,.jpeg}
\theoremstyle{plain}
\newtheorem{theorem}{Theorem}
\newtheorem{lemma}{Lemma}

\newtheorem{proposition}{Proposition}

\theoremstyle{definition}
\newtheorem{definition}{Definition}

\newtheorem{example}{Example}

\theoremstyle{remark}

\numberwithin{equation}{section}
\begin{document}
\title[Short Title]{A quick distributional way to reproduce some results of the Riemann zeta function}
\author{Junfa Deng}
\address{Junfa Deng, School of Mathematics\\
Hefei University of Technology\\
Hefei 230009\\
China}

\author{Yunyun Yang}
\address{Yunyun Yang, School of Mathematics\\
Hefei University of Technology\\
Hefei 230009\\
China}

\email{yangyunyun@hfut.edu.cn}
\email{dengjunfa@mail.hfut.edu.cn}
\author{Hao Zhang}
\address{Hao Zhang, }
\email{821906929@qq.com}
\date{\today}
\keywords{Riemann zeta function, distribution, Ces\`aro limit, Dirac delta function, Hadamard finite part }

\begin{abstract}
The evaluation of the Riemann zeta function at negative integers is a classical result typically obtained through analytic continuation or contour integration. In this paper, we present a novel and concise derivation of these special values by employing the theory of Ces\`aro limit of distributions, a generalized limit concept developed by Estrada, Kanwal, and Fulling. We use this tool to give a quick proof of
the result that
\[
\zeta(-n)=-\frac{B_{n+1}}{n+1},
\]
for $n\in\mathbb{N}^+.$ We also give a short discussion on $\zeta^{\prime
}(\alpha)$ and compute the value of $\zeta^{\prime}(0)$.

\end{abstract}
\maketitle







\section{Introduction}

The Riemann zeta function plays a crucial role in number theory and
mathematical analysis, with significant applications in statistics and
physics. It is derived from the analytic continuation of the series
$\sum_{n=1}^{\infty}{\frac{1}{n^{s}}}$. However, the evaluation of the Riemann
zeta function is not straightforward, especially when dealing with divergent
series. For instance, $\zeta(-1)$ yields the value $-1/12$, whereas the
corresponding series before analytic continuation represents the sum of all
natural numbers, leading to the seemingly paradoxical equation
\begin{equation}
\label{1}1+2+3+\cdots+n+\cdots=-\frac{1}{12}.
\end{equation}
It is crucial to note that equation (\ref{1}) is meaningful only under the
framework of analytic continuation of the Riemann zeta function, as the
left-hand side is divergent in the conventional sense.

Summation of divergent series has always been a difficult problem in analysis.
Here, the \textquotedblleft summation\textquotedblright\ does not refer to the
sum of the original series in the usual sense, but a way to characterize the
\textquotedblleft quantity\textquotedblright\ of a divergent series, making
the divergent series summable under a certain definition. Such summation
methods are also called the generalized sums of divergent series. Besides the
Ces\`{a}ro mean summation, there are many traditional summation methods for
divergent series, such as the Cauchy summation method, the Abel summation
method, and the Ramanujan summation method, etc \cite{CB}. The generalized sum
of a divergent series is essentially an extension. That is to say, the
generalized sum of a divergent series must be meaningful in a certain sense.
After defining the summation method, it should also be applicable to
convergent series, ensuring the invariance of the sum of convergent series.
Take the series $\sum_{n=0}^{\infty}(-1)^{n}$ as an example, which is a
classic divergent series. Its partial sums have no limit, only upper and lower
limits. But we can try to consider the power series
\begin{equation}
1+q+q^{2}+\cdots+q^{n}+\cdots=\frac{1}{1-q}. \label{2}%
\end{equation}
Obviously, the power series is only meaningful when $|q|<1$. However, the
right-hand side of equation (\ref{2}) itself is a smooth function on
$\mathbb{R}\setminus\{1\}$, regardless of convergence. Thus we can directly
take $\frac{1}{1-q}$ as the generalized sum of the divergent power series when
$q\neq1$. Let $q=-1$, then we obtain
\[
\sum_{n=0}^{\infty}(-1)^{n}=\frac{1}{2}\quad(\infty).
\]
The most famous application of generalized sums in analytic number theory is
the evaluation problem of the Riemann zeta function. Among the summation
methods mentioned above, only Ramanujan summation has been successfully applied to evaluate the values of the Riemann zeta function at negative integers. However, Ramanujan summation is profound, involving
Bernoulli number corrections, extended integrals, and more. A systematic
theory can be found in reference \cite{CB}. 

Indeed, by employing the tools of distributions, one can concisely compute
certain values of the zeta function. In this paper, we use the tool of the
Ces\`{a}ro mean of distributions to rederive the values of the zeta function
at negative integers. This is a new proof and our main result.

Moreover, we use the same method to discuss $\zeta^{\prime}(\alpha)$, and
compute $\zeta^{\prime}(0)$. Unfortunately, it turns out that we need more
tools to be able to calculate $\zeta^{\prime}(\alpha)$ for arbitrary $\alpha$.

\section{Preliminaries}

\subsection{The Ces\`aro concept in distributions}

\subsubsection{Ces\`{a}ro summation and Ces\`{a}ro integration}

Ces\`{a}ro summation and Ces\`{a}ro integration are classical concepts in
analysis. For a detailed treatment, the reader may refer to
\cite{Hardy}. Here we just recall the definitions and basic properties.

\begin{definition}
\cite{Estrada,Hardy} For an arbitrary series $\sum_{n=0}^{\infty}{a_{n}}$ with
partial sums $A_{n}=\sum_{j=0}^{n}{a_{j}}$, define $A_{n}^{0}=A_{n}$, and
\[
A_{n}^{k+1}=A_{0}^{k}+A_{1}^{k}+\cdots+A_{n}^{k}.
\]
If $\lim_{n\to\infty}{C_{n}^{k}}=\lim_{n\to\infty}{A_{n}^{k}/\binom{n+k}{k}%
}=S$, then the series $\sum_{n=0}^{\infty}{a_{n}}$ is said to be $k$-th order
Ces\`aro summable to $S$, denoted as
\[
\sum_{n=0}^{\infty}{a_{n}}=S\quad(C,k).
\]
Note that $\binom{n+k}{n}\sim\frac{n^{k}}{k!}$ as $n\to\infty$, then $S$ is
generally also expressed as $\lim_{n\to\infty}{C_{n}^{k}}=\lim_{n\to\infty
}{\frac{k!A_{n}^{k}}{n^{k}}}=S$.
\end{definition}

Still take the series $\sum_{n=0}^{\infty}{(-1)^{n}}$ as an example. Its
partial sums satisfy
$$A_{n}=%
\begin{cases}
1,n=2m-1\\
0,n=2m
\end{cases}
,\quad A_{n}^{1}=%
\begin{cases}
\frac{n+1}{2}, & n=2m-1\\
\frac{n}{2}, & n=2m
\end{cases}.$$
Obviously, $\lim_{n\rightarrow\infty}C_{n}^{1}=\lim_{n\rightarrow\infty}%
\frac{A_{n}^{1}}{n}=\frac{1}{2}$, that is, $\sum_{n=0}^{\infty}{(-1)^{n}%
}=\frac{1}{2}\quad(C,1)$. This is consistent with the result obtained by using
the generalized sum of power series. In addition, when a series converges, the
$k$-th Ces\`{a}ro sum of it for any $k$ is equal to the sum of the convergent
series. This kind of generalized sum is also called Ces\`{a}ro mean summation.
For example, from the second Ces\`{a}ro sum one could easily compute that 
$\sum_{n = 0}^{\infty} (-1)^{n} n = -\frac{1}{4}\quad(C,2)$.

However, the Ces\`{a}ro summation method still has strict constraints. For example, for the power series $\sum_{n=0}^{\infty}2^{n}$, the order of magnitude of $A_{n}
^{k}$ is always much larger than $n^{k}$, thus this series does not have a
generalized sum under the Ces\`{a}ro definition. In fact, one can see that a
necessary condition for Ces\`{a}ro summability is $a_{n}=O(n^{k})$.

For the Riemann zeta function $\zeta(s)$, in the right-half plane
$\mathrm{Re}(s)>1$ of the complex plane, it has the form of series
$\sum_{n=1}^{\infty}{\frac{1}{n^{s}}}$. While in the region $\mathrm{Re}(s)<1$, it is the corresponding analytic continuation. For example, when
$s=-1$, people sometimes write down the equation
\begin{equation}
1+2+3+\cdots+n+\cdots=-\frac{1}{12}\quad\label{2.1}%
\end{equation}
to express the meaning that $\zeta(-1)=-\frac{1}{12}$ \cite{TEC}.

In fact, it is well-known that the Riemann zeta function $\zeta(s)$ satisfies
the functional equation
\[
\zeta(s)=2^{s}\pi^{s-1}\sin\left(  \frac{\pi s}{2}\right)
\Gamma(1-s)\zeta(1-s).
\]
By the uniqueness of the analytic continuation, (\ref{2.1}) can be found by
any suitable generalized sum of the divergent series. Unfortunately, although
the series in (\ref{2.1}) satisfies the necessary condition for Ces\`{a}ro
summability, no matter what order $k$ is, it is difficult to obtain this
result via the Ces\`{a}ro sum. Of course, one has other summations such as the Ramanujan summation etc. While the concept of Ces\`{a}ro limit of distributions introduced by Estrada and Kanwal \cite{Estrada} could be applied here to get some short and direct proofs. Let us first recall the classical definition of the Ces\`{a}ro integral.

\begin{definition}
\cite{Estrada,LA} For an infinite integral $\int_{0}^{\infty}{f(x)\mathrm{d}%
x}$, if $\lim_{x\to\infty}\int_{0}^{x}{(1-t/x)^{k}f(t)\mathrm{d}t}=I$ exists,
then the $k$-th order Ces\`aro mean of the infinite integral $\int_{0}%
^{\infty}{f(x)\mathrm{d}}x$ is defined as $I$, where $k\in\mathbb{R} $ and
$k>-1$. This is denoted by
\[
\int_{0}^{\infty}{f\left(  x \right)  \mathrm{d}x}=I\quad(C,k).
\]

\end{definition}

Note that here $k$ is not necessarily an integer. When $k$ is an integer, let
$F_{0}(x) = \int_{0}^{x} f(t) \mathrm{d}t$, and $F_{k + 1}(x) = \int_{0}^{x}
F_{k}(t) \mathrm{d}t$, that is, $F_{k}(x)$ is the $k$-th order primitive
function of $F_{0}(x)$. Then it satisfies
\begin{equation}
\label{2.2}\lim_{x \to\infty} \frac{k! F_{k}(x)}{x^{k}} = I.
\end{equation}

Essentially, both the Ces\`{a}ro integral and Ces\`{a}ro summation are
definitions under the idea of mean value. A warm-up example would be that
$\int_{0}^{\infty}\sin(ax)\mathrm{d}x$ is 1-st order Ces\`{a}ro integrable,
since $\lim_{x\rightarrow\infty}\frac{ax-\sin(ax)}{a^{2}x}=\frac{1}{a}.$

\subsubsection{Ces\`aro limit}

\begin{definition}
\cite{Estrada}\label{def3} Let $\beta\in\mathbb{R}\setminus\{-1,-2,\cdots\}$.
We say that
\begin{equation}
\label{2.3}f(x)=O(x^{\beta})\quad(C),\quad x\to\infty,
\end{equation}
if there exists a positive integer $N$ and a polynomial $p(x)$ of degree $N-1
$ such that the $N$-th order primitive function $F(x)$ of $f(x)$ (the
primitive function generally takes $\int_{0}^{x}{f(t)\mathrm{d}t}$) is locally
integrable and satisfies
\[
F(x)=p(x)+O(x^{N+\beta}),\quad x\to\infty.
\]
Given an $N$, (\ref{2.3}) can also be expressed as:
\[
f(x) = O(x^{\beta}) \quad(C, N), \quad x \to\infty.
\]

\end{definition}

When $\beta=\{-1,-2,\ldots\}$, a similar definition to (\ref{2.3}) must also
be given. Let $\rho_{k,N}(x)$ be the $N$-th order primitive function of
$x^{-k}$, we have the following definition:

\begin{definition}
\cite{Estrada}\label{def4} Let $k \in\{1, 2,\cdots\}$. We say that
\[
f(x)=O(x^{-k})\quad(C),\quad x\to\infty,
\]
if there exists a positive integer $N$ and a polynomial $p(x)$ of degree $N-1
$ such that the $N$-th order primitive function $F(x)$ of $f(x)$ is locally
integrable and satisfies
\[
F(x)=p(x)+O(\rho_{k,N}(x)),\quad x\to\infty.
\]

\end{definition}

In Definition \ref{def3} and Definition \ref{def4}, the function $f(x)$ can be
any locally integrable function or a distribution in the space $\mathcal{D}%
^{\prime}$. If the big-$O$ symbol is replaced by a little-$o$ symbol, one has
a similar definition for $f(x)=o(x^{\beta})\quad(C)$. If $f(x)=o(x^{\beta
})\quad(C)$ for any $\beta$, then one denotes that $f(x)=o(x^{-\infty}%
)\quad(C)$.

From the definition, it is easy to prove the following conclusions:

\begin{proposition}
\cite{Estrada}\label{prop1} Let $f(x) \in\mathcal{D}^{\prime}$ and satisfy
$f(x) = O(x^{\beta})\quad(C), \quad x \to\infty$. Then we have

\begin{enumerate}
\item If $f(x) = O(x^{\beta}) \quad(C, N)$ and $\beta\notin\mathbb{Z}$, then
$f^{(k)}(x) = O(x^{\beta- k}) \quad(C, N + k)$.

\item If $f(x) = O(x^{\beta}) \quad(C, N)$ and $F$ is the $n$-th order
primitive function of $f$, then there exist real numbers $a_{j}$ such that
\[
F(x) = a_{n}x^{n} + \cdots+ a_{1}x + a_{0} + O(x^{\beta+ n}) \quad(C, M),
\quad x \to\infty,
\]
where $M = \max\{N - n, 0\}$.

\item Let $\alpha\in\mathbb{R}$, if $\alpha+\beta\neq-1,-2,\cdots$, then
\[
x^{\alpha}f(x)=o(x^{\alpha+\beta})\quad(C),\quad x\to\infty.
\]

\end{enumerate}
\end{proposition}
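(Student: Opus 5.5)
All three parts come straight from Definition \ref{def3}: write down the $N$-th primitive $F_N$ of $f$ with $F_N(x)=p(x)+O(x^{N+\beta})$, $\deg p\le N-1$, and then either differentiate, re-index, or multiply by $x^\alpha$. Throughout I work with distributions on $(a,\infty)$ for large $a$, so that $x^\alpha$ is smooth there and primitives are determined up to polynomials.

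\textbf{Part (1).} Fix a primitive $G$ of order $N+k$ for $f^{(k)}$. Then $G^{(k)}$ and $F_N$ are both $N$-th primitives of $f^{(k)}$ differentiated $k$ times, so they agree up to a polynomial of degree $\le N-1$. Hence $G$ and $F_N$ differ by a polynomial of degree $\le N+k-1$, and
\[
G(x)=\tilde p(x)+O(x^{N+\beta}) = \tilde p(x) + O(x^{(N+k)+(\beta-k)}),
\]
with $\deg\tilde p\le N+k-1$. This is exactly $f^{(k)}=O(x^{\beta-k})\ (C,N+k)$. The hypothesis $\beta\notin\mathbb Z$ ensures that $\beta-k$ is not a negative integer, so Definition \ref{def3} (not Definition \ref{def4}) is the relevant one.

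\textbf{Part (2).} Suppose first that $n\ge N$. Integrate $F_N=p+O(x^{N+\beta})$ a further $n-N$ times. The polynomial part becomes a polynomial of degree $\le n-1$, plus the new integration constants. For the $O$-part, the integral of $O(x^{\gamma})$ is $O(x^{\gamma+1})$ plus a constant when $\gamma>-1$. When $\gamma<-1$ the tail $\int_x^\infty$ converges, so the integral is a constant plus $O(x^{\gamma+1})$. Since $\beta\ne -1,-2,\dots$, we never have $\gamma=-1$, so no logarithms appear. This yields $F=a_nx^n+\dots+a_0+O(x^{\beta+n})$ in the ordinary sense, i.e. $(C,0)$.

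If instead $n<N$, then $F$ is itself an $n$-th primitive, and its $(N-n)$-th primitive is $F_N$ up to a polynomial. Subtracting the polynomial $a_nx^n+\dots+a_0$, whose $(N-n)$-th primitive has degree $\le N$, the remainder has $(N-n)$-th primitive equal to a polynomial of degree $\le N-1$ plus $O(x^{N+\beta})$. That is precisely $O(x^{\beta+n})\ (C,N-n)$, with the top coefficients absorbed into the $a_j$.

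\textbf{Part (3).} This is the main obstacle: multiplication by $x^\alpha$ does not commute with taking primitives. My plan is to induct on $N$ using integration by parts. Write $f=F_1'$ with $F_1=O(x^{\beta+1})\ (C,N-1)$ modulo constants. Then
\[
x^\alpha f=(x^\alpha F_1)'-\alpha x^{\alpha-1}F_1 .
\]
By induction, $x^{\alpha-1}F_1=O(x^{\alpha+\beta})\ (C,N-1)$ and $x^\alpha F_1=O(x^{\alpha+\beta+1})\ (C,N-1)$. Differentiating the latter as in part (1) gives order $x^{\alpha+\beta}$ at level $(C,N)$, so the sum is $O(x^{\alpha+\beta})\ (C,N)$. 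The base case $N=0$ is trivial.

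Two technical points need care. First, the constant in $F_1$ contributes $c\,x^\alpha$ terms. These are harmless after differentiation, and $c\alpha x^{\alpha-1}$ is the derivative of $c\,x^\alpha$, so they cancel exactly. Second, the condition $\alpha+\beta\ne-1,-2,\dots$ is used at the step where we integrate $x^{\alpha+\beta+j}$-type remainders, to avoid logarithms. The statement writes $o$; the same argument with $o$ in place of $O$ gives the little-$o$ version. As stated, with $O$ hypothesis, I would prove the $O$ conclusion and note that the $o$ form follows identically under an $o$ hypothesis.
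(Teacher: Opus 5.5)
The paper does not prove this proposition; it only says the statements follow easily from the definition and cites Estrada--Kanwal. So there is no proof to compare against. Your argument is correct and supplies the missing details.

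You are also right to flag part (3). As literally written, with an $O$ hypothesis and an $o$ conclusion, it is false. Take $f\equiv 1$ and $\alpha=\beta=0$: every $N$-th primitive $x^N/N!+q(x)$ fails to be $p(x)+o(x^N)$ with $\deg p\le N-1$. The $O\Rightarrow O$ and $o\Rightarrow o$ versions are the correct ones. The $o\Rightarrow o$ version is what the paper later needs for the terms $P_m(\{x\})x^m$.

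A few small repairs are needed.

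\emph{Part (1).} ``$G^{(k)}$ and $F_N$'' should read ``$G^{(k)}$ and $F_N^{(k)}$''. More simply, $F_N$ is itself an $(N+k)$-th primitive of $f^{(k)}$, so no $G$ is needed. Your argument actually proves (1) whenever $\beta-k\notin\{-1,-2,\dots\}$. That stronger form is what you invoke in (3), with exponent $\alpha+\beta+1$, which may be an integer (e.g.\ $\alpha=\beta=0$). In that case the stated hypothesis $\beta\notin\mathbb{Z}$ would not apply.

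\emph{Part (3).} You should state explicitly the lift $(C,N-1)\Rightarrow(C,N)$ used for the term $\alpha x^{\alpha-1}R$. It amounts to one further integration of an $O(x^{N-1+\alpha+\beta})$ remainder. This is legitimate precisely because $\alpha+\beta\neq -N$.

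\emph{A shorter route for (3).} The induction can be collapsed into a single step. Subtract $p$, so that $f=R^{(N)}$ with $R=O(x^{N+\beta})$ in the ordinary sense. Then use the identity $x^\alpha R^{(N)}=\sum_{j=0}^{N}(-1)^j\binom{N}{j}\bigl((x^\alpha)^{(j)}R\bigr)^{(N-j)}$. The $j$-th term is, by definition, $O(x^{\alpha+\beta})$ $(C,N-j)$, hence $(C,N)$ after lifting. This avoids part (2) and the bookkeeping of the constant $a_0$. Your induction is exactly this identity unrolled one derivative at a time.
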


\begin{definition}
\cite{Estrada} Let $f(x) \in\mathcal{D}^{\prime}$ . We say that $f(x)$ has a
Ces\`aro limit $L$ as $x\to\infty$, if $f(x)=L+o(1)\quad(C),\quad x\to\infty.$
The Ces\`aro limit is denoted as:
\[
\lim_{x\to\infty}f(x)=L\quad(C).
\]

\end{definition}

It is not hard to prove the following relation between the Ces\`{a}ro limit in
the usual sense and in the distributional sense by (\ref{2.2}) \cite{Estrada}:

\begin{proposition}
\label{prop2} Let $F_{k}(x)$ be the $k$-th order primitive function of $f(x)$,
and suppose that $\lim_{x\rightarrow\infty}\frac{k!F_{k}(x)}{x^{k}}=L$. Then
\[
\lim_{x\rightarrow\infty}f(x)=L\quad(C).
\]

\end{proposition}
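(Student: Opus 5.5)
The plan is to verify Definition of the Ces\`aro limit directly: we must show that $f(x)=L+o(1)\ (C)$. By the definition of $o(x^{0})\ (C)$, this means we must find a positive integer $N$ and a polynomial $p(x)$ of degree $N-1$ such that the $N$-th order primitive of $f(x)-L$ equals $p(x)+o(x^{N})$ as $x\to\infty$. Note that $\beta=0$ is not a negative integer, so Definition \ref{def3} (with little-$o$) is the relevant one. The natural choice is $N=k$ and $p\equiv 0$, where $k$ is the order from the hypothesis. If $k=0$ the hypothesis is an ordinary limit; then we take $N=1$, using the fact that an ordinary limit implies the $(C,1)$ limit.

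First I will compute the $k$-th order primitive of $f(x)-L$, with primitives normalized as $\int_0^x$ as in the paper. The $k$-th primitive of the constant $L$ is $Lx^{k}/k!$. Hence the $k$-th primitive of $f-L$ is
\[
G(x)=F_{k}(x)-\frac{Lx^{k}}{k!}.
\]
The hypothesis $\lim_{x\to\infty}k!F_k(x)/x^k=L$ says exactly that $k!F_k(x)=Lx^k+o(x^k)$. Therefore $G(x)=o(x^{k})$, which is the required estimate with $p\equiv0$.

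There is one subtlety to check. The definition requires $p$ to have degree $N-1$, and it also requires the primitive to be locally integrable. The zero polynomial is allowed, since the definition only needs $p$ to have degree at most $N-1$. Local integrability holds because $F_k$ is continuous for $k\ge1$.

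For the case $k=0$, I would pass to $N=1$. The first primitive of $f-L$ is $\int_0^x (f(t)-L)\,dt$. Given $\varepsilon>0$, choose $X$ such that $|f(t)-L|<\varepsilon$ for all $t>X$. Then this integral is bounded by a constant plus $\varepsilon x$, so it is $o(x)$. This reproduces (\ref{2.2}) one order higher.

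The main obstacle is purely definitional: the paper's convention fixes the primitive as $\int_0^x$, and we must make sure the chosen $F_k$ matches that convention. If a different base point were used, $F_k$ would change by a polynomial of degree at most $k-1$. That polynomial is absorbed into $p(x)$ and does not affect the $o(x^k)$ term. So in either case the conclusion $\lim_{x\to\infty}f(x)=L\ (C)$ follows.
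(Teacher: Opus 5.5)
Your argument is correct and is the same direct unwinding of definitions the paper intends when it says the proposition follows from (\ref{2.2}): the $k$-th primitive of $f-L$ is $F_k(x)-Lx^k/k!=o(x^k)$, which is the little-$o$ form of Definition \ref{def3} with $\beta=0$, $N=k$ and $p\equiv 0$. Your separate handling of $k=0$ (passing to $N=1$) and your remark that changing the base point only adds a polynomial of degree at most $k-1$ supply details the paper leaves implicit.
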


\bigskip

It is easy to see that $\sin x=o(x^{-\infty})\quad(C)$ and $\cos
x=o(x^{-\infty})\quad(C).$ In fact, we cite the following lemma for our proof
in the section \ref{sec3}. 

\begin{lemma}\label{lemma} 
A periodic function $f(x)$ with periodic mean value 0 is
$o(x^{-\infty})\quad(C)$.
\end{lemma}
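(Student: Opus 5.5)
Fix a periodic, locally integrable $f$ with period $T>0$ and mean $\frac{1}{T}\int_0^T f(t)\,\mathrm{d}t=0$. By Definition \ref{def3} we must show: for every $\beta$ (it suffices to take $\beta\notin\mathbb{Z}$, $\beta<0$, since $o(x^{\beta})$ for all such $\beta$ gives all $\beta$) there are an integer $N$ and a polynomial $p$ of degree $N-1$ with
\[
F_N(x)=p(x)+o(x^{N+\beta}),\qquad x\to\infty,
\]
where $F_N$ is the $N$-th primitive of $f$. The idea is that successive primitives of a mean-zero periodic function are again periodic after subtracting a constant, so each $F_N$ is a polynomial plus a bounded periodic function.

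\textbf{Step 1.} Since $f$ has mean zero, $F_1(x)=\int_0^x f(t)\,\mathrm{d}t$ is $T$-periodic: $F_1(x+T)-F_1(x)=\int_x^{x+T}f=0$. Set $c_1=\frac1T\int_0^T F_1$ and $g_1=F_1-c_1$. Then $g_1$ is continuous, $T$-periodic, and has mean zero.

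\textbf{Step 2.} Iterate by induction. Suppose
\[
F_k = q_k + g_k,
\]
with $q_k$ a polynomial of degree at most $k-1$ and $g_k$ periodic, bounded, and of mean zero. Integrating from $0$ gives
\[
F_{k+1}(x)=\int_0^x q_k + \int_0^x g_k .
\]
By Step 1 applied to $g_k$, the second integral equals $c_{k+1}+g_{k+1}(x)$ with $g_{k+1}$ periodic, bounded, and of mean zero. So $F_{k+1}=q_{k+1}+g_{k+1}$ with $\deg q_{k+1}\le k$.

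\textbf{Step 3.} Given $\beta<0$, choose $N>-\beta$, so that $N+\beta>0$. Then $F_N(x)=q_N(x)+O(1)$, and $O(1)=o(x^{N+\beta})$. The degree of $q_N$ is at most $N-1$; if it is smaller, we regard $q_N$ as a polynomial of degree $N-1$ with vanishing top coefficients, which is how the definition should be read. Local integrability of $F_N$ is clear. Hence $f=o(x^{\beta})\ (C,N)$. Since $\beta$ was arbitrary, $f=o(x^{-\infty})\ (C)$.

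The main obstacle is mild. It lies in keeping the mean-zero normalization at every stage: the constant $c_{k+1}$ must be split off into the polynomial part so that the remaining periodic part again has mean zero. Otherwise the next primitive acquires a linear drift that is not periodic. Once this is arranged, the argument is routine. If $f$ is only a periodic distribution, the same argument works after choosing an antiderivative of mean zero, since taking a primitive of a mean-zero periodic distribution yields a periodic one.
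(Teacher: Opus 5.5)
Your proof is correct and follows the paper's approach. The paper also shows that $F_1$ is $T$-periodic because $\int_x^{x+T}f=0$, and subtracts its mean $a_0$ so that $F_1-a_0$ again satisfies the hypothesis; it leaves the iteration implicit, and your induction $F_k=q_k+g_k$ together with the final choice $N>-\beta$ supplies the details the paper omits.
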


\begin{proof}
Let $T$ be the period of $f$. We only need to show that $F_{1}(x)=\int_{0}^{x}f(t)\mathrm{d}t$ satisfies the
conclusion.
\[
F_{1}(x+T)=\int_{0}^{x+T}f(t)\mathrm{d}t=\int_{0}^{x}f(t)\mathrm{d}t+\int%
_{x}^{x+T}f(t)\mathrm{d}t=\int_{0}^{x}f(t)\mathrm{d}t=F_{1}(x).
\]
Then $F_{1}(x)-a_{0}$ has zero mean value, where $a_{0}$ is the mean value of
$F_{1}$. 
\end{proof}

\begin{definition}
\cite{Estrada}\label{def6} Let $f$ be a distribution and its support be
bounded on the left end of $\mathbb{R}$ with the left endpoint $a$. Let $\phi$
be a test function. The value of $\langle f(x),\phi(x)\rangle$ in the
Ces\`{a}ro sense is said to exist if
\[
\lim_{x\rightarrow\infty}\int_{a}^{x}f(t)\phi(t)\mathrm{d}t=L\quad(C).
\]
It is denoted as $\langle f(x),\phi(x)\rangle=L\quad(C)$.
\end{definition}

As one can see, essentially, Definition \ref{def6} is a generalization of the
Ces\`{a}ro limits in the theory of distributions.

\subsection{Regularization and the Riemann zeta function}

In this subsection, we briefly summarize the tools needed for our proof of the
Riemann zeta values. Most results in this subsection are from \cite{Estrada}.

\begin{definition}
\cite{Estrada} $\mathcal{K}_{q}(\mathbb{R})$ is a test function space on
$\mathbb{R}$. The elements $\phi(x)$ in $\mathcal{K}_{q}(\mathbb{R})$ are smooth
functions, and for any $\phi(x)$, there exists a $q \in\mathbb{R}$ such that
\[
D^{k}\phi(x) = O(|x|^{q-k}), \quad|x| \to\infty.
\]
$\mathcal{K}(\mathbb{R})$ is the inductive limit of $\mathcal{K}%
_{q}(\mathbb{R})$ as $q\nearrow\infty$.
\end{definition}

Recall that, here "the inductive limit" means that $f\in\mathcal{K}$ if and
only if there exists some $q$ such that $f\in\mathcal{K}_{q}$.

\begin{definition}
\cite{Estrada} The elements in $\mathcal{K}\{x^{\alpha_{n}}\}$ are smooth
functions defined on $(0,\infty)$ and bounded by polynomial growth. And for
any $\phi(x)\in\mathcal{K}\{x^{\alpha_{n}}\}$, $\phi(x)$ satisfies a strong
asymptotic expansion near the origin:
\[
\phi\left(  x\right)  \sim a_{1}x^{\alpha_{1}}+a_{2}x^{\alpha_{2}}%
+\cdots+a_{n}x^{\alpha_{n}}+\cdots,\quad x\rightarrow0^{+}.
\]

\end{definition}

Here, "strong" means the derivative of $\phi(x)$ of any order admits an
asymptotic expansion as the term-by-term derivative of the right-hand side.

\begin{theorem}
\cite[Thm.6.7.2]{Estrada}\label{thm1} If $f \in\mathcal{K}^{\prime}$ and
$\phi\in\mathcal{K}$, then the distributional evaluation $\langle f,
\phi\rangle$ is Ces\`aro summable.
\end{theorem}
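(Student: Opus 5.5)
The natural route is to prove the statement first for $\mathcal{K}_q$ and then pass to the inductive limit. Fix $\phi\in\mathcal{K}(\mathbb{R})$. By the definition of the inductive limit, $\phi\in\mathcal{K}_q(\mathbb{R})$ for some $q$, so that $D^k\phi(x)=O(|x|^{q-k})$ as $|x|\to\infty$. The restriction of $f\in\mathcal{K}'$ to $\mathcal{K}_q$ is continuous. We want to show that $\lim_{x\to\infty}\langle f(t),\phi(t)\chi_{[-x,x]}(t)\rangle$ exists in the sense $(C)$, or, in the setting of Definition \ref{def6}, that $\lim_{x\to\infty}\int_a^x f(t)\phi(t)\,\mathrm{d}t$ exists $(C)$, with Ces\`aro limit equal to $\langle f,\phi\rangle$.

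The key structural input is a characterization of $\mathcal{K}'$: a distribution $f$ belongs to $\mathcal{K}'$ exactly when $f(x)=o(x^{-\infty})$ $(C)$ as $|x|\to\infty$, i.e.\ it satisfies the moment asymptotic expansion. I would first establish the direction needed here. Continuity of $f$ on $\mathcal{K}_q$ lets us write $f$ near infinity as a finite-order derivative, $f=F^{(N)}$, where the primitive $F$ is a continuous function satisfying $F(x)=p(x)+o(|x|^{-M})$ for every $M$, with $p$ a polynomial of degree at most $N-1$. This is done by pairing $f$ with the translates/dilates $\phi(\lambda t)$ and using the seminorms of $\mathcal{K}_q$, in the spirit of Definition \ref{def3}.

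With this in hand, take the cutoff $\chi_{[a,x]}$ and integrate by parts $N$ times:
\[
\int_a^x f(t)\phi(t)\,\mathrm{d}t=\sum_{j=0}^{N-1}(-1)^j F^{(N-1-j)}(x)\phi^{(j)}(x)+(-1)^N\int_a^x F(t)\phi^{(N)}(t)\,\mathrm{d}t + C_a .
\]
The subtracted polynomial $p$ contributes terms of the form $x^{m}\phi^{(j)}(x)$. Combining Proposition \ref{prop1}(1) and (3) with $\phi^{(j)}=O(x^{q-j})$, the boundary terms are $o(1)$ in the Ces\`aro sense: $F-p$ is $o(x^{-\infty})$ $(C)$, and multiplying by a function of power growth keeps it $o(x^{-\infty})$ $(C)$. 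The integral term converges once $N>q+1$ is chosen, since $F\phi^{(N)}$ then decays integrably. Finally, Proposition \ref{prop2} turns the resulting statement about primitives into the Ces\`aro limit, and continuity identifies that limit with $\langle f,\phi\rangle$.

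The main obstacle is the structural step: showing that continuity on $\mathcal{K}_q$ for every $q$ forces the primitive to be polynomial up to $o(x^{-\infty})$ error. My first attempt would be to test $f$ against $\psi(t)=t^{k}\theta(t/\lambda)$, with $\theta$ a smooth cutoff, and bound $\langle f,\psi\rangle$ by the $\mathcal{K}_q$ seminorms uniformly in $\lambda$. This controls the Ces\`aro means of all moments and hence of the primitives. A secondary subtlety is that the polynomial terms $x^m\phi^{(j)}(x)$ need not tend to zero pointwise; their Ces\`aro limit must still be shown to vanish or to combine into a constant, using that only the $(C)$ sense is required.
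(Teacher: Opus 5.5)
The structural step you single out as the main obstacle is false as stated, so the plan cannot be completed in this form.

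Take $f(x)=e^{ix}$. For $\phi\in\mathcal{K}_q$ set $\langle e^{ix},\phi\rangle=i^{N}\int_{-\infty}^{\infty}e^{ix}\phi^{(N)}(x)\,\mathrm{d}x$ with $N\geq q+2$. This is independent of $N$, agrees with the usual pairing on $\mathcal{D}$, and is bounded by a $\mathcal{K}_q$-seminorm, so $e^{ix}\in\mathcal{K}'$. Yet every $N$-th primitive of $e^{ix}$ is $i^{-N}e^{ix}$ plus a polynomial of degree at most $N-1$. So $F-p$ does not even tend to $0$, whatever $N$ and $p$ you choose.

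Membership in $\mathcal{K}'$ can only give the Ces\`aro statement of Definition \ref{def3}: for each $\beta$ there are an order $N=N(\beta)$ and a polynomial $p$ with $F_N=p+o(x^{N+\beta})$. The decay is measured relative to $x^{N}$, and $N(\beta)\to\infty$ as $\beta\to-\infty$; for $e^{ix}$, $F_N-p$ is merely $O(1)$, so $N+\beta<0$ is impossible without raising $N$.

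Your sketch does not deliver even this correct version. Bounding $\langle f,t^{k}\theta(t/\lambda)\rangle$ uniformly in $\lambda$ controls smooth dilation averages, i.e.\ the parametric behaviour of $f(\lambda x)$ as $\lambda\to\infty$ (the moment asymptotic expansion). Converting a parametric estimate $f(\lambda x)=O(\lambda^{\beta})$ in $\mathcal{D}'$ into $f(x)=O(x^{\beta})$ $(C)$ is a separate nontrivial theorem. It requires extracting a single primitive order $N$ for each $\beta$ from a bounded family of distributions, which is a Banach--Steinhaus type argument. The implication $f\in\mathcal{K}'\Rightarrow f(x)=o(|x|^{-\infty})$ $(C)$ is the real content of the statement, and it is missing.

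Granting that implication, your integration by parts can be repaired, but not as written. The term $\int_a^x F\phi^{(N)}\,\mathrm{d}t$ need not converge, because $p\phi^{(N)}$ is only $O(x^{q-1})$. You must first fix a non-integer $\beta<-q-1$ (so the choice depends on $\phi$), then take $N=N(\beta)$ and split $F=p+E$.

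The polynomial pieces cancel exactly. The boundary terms containing $p^{(N-1-j)}$, together with $(-1)^N\int_a^x p\phi^{(N)}\,\mathrm{d}t$, add up to $\int_a^x p^{(N)}\phi\,\mathrm{d}t=0$ plus a constant. This settles your ``secondary subtlety'', and $E\phi^{(N)}=O(x^{\beta+q})$ is integrable.

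For the remaining boundary terms $E^{(N-1-j)}\phi^{(j)}$, Proposition \ref{prop1}(3) does not apply, since it only covers multiplication by $x^{\alpha}$. You need the lemma that multiplication by $\psi\in\mathcal{K}_r$ maps $O(x^{\gamma})$ $(C)$ into $O(x^{\gamma+r})$ $(C)$. It follows from $G^{(N)}\psi=\sum_{j=0}^{N}(-1)^{j}\binom{N}{j}\bigl(G\psi^{(j)}\bigr)^{(N-j)}$ together with Proposition \ref{prop1}(1). This makes those terms $O(x^{\beta+q+1})=o(1)$ $(C)$.

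For comparison, the paper gives no proof and simply cites Estrada--Kanwal, where the result is obtained from the characterization of $\mathcal{K}'$ by Ces\`aro decay. With that characterization available you can skip the integration by parts altogether:
multiplication by $\phi\in\mathcal{K}$ is continuous on $\mathcal{K}$, so $f\phi\in\mathcal{K}'$;
hence $f\phi=o(|x|^{-\infty})$ $(C)$;
hence the primitive of $f\phi$ has Ces\`aro limits at $\pm\infty$, which is exactly the summability of $\langle f,\phi\rangle=\langle f\phi,1\rangle$.
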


A detailed proof of Theorem \ref{thm1} can be found in reference \cite{Estrada}.

Let $f(x)=\sum_{n=1}^{\infty}\delta(x-n)-H(x-1)$, where $\delta(x)$ is
the Dirac delta function and $H(x)=%
\begin{cases}
0, & x<0\\
1, & x>0
\end{cases}
$ is the Heaviside function. Then $f(x)\in\mathcal{K}^{\prime}$. Take a smoothing function $\phi_{0}\left(
x\right)  $ such that $\phi_{0}\left(  x\right)  =%
\begin{cases}
1,x>1\\
0,x<1/2
\end{cases}
$, we have $\phi_{\alpha}\left(  x\right)  =\phi_{0}\left(  x\right)  \cdot
x^{\alpha}\in\mathcal{K}$. From Theorem \ref{thm1}, denoting $Z(\alpha
)=\langle f(x),\phi_{\alpha}(x)\rangle $, then
\begin{equation}
Z(\alpha)=\sum_{n=1}^{\infty}n^{\alpha}-\int_{1}^{\infty}x^{\alpha}%
\mathrm{d}x\quad(C). \label{2.4}%
\end{equation}
Note that the two terms on the right-hand side of equation (\ref{2.4})
converge when $\mathrm{Re}(\alpha)<-1$, thus
\[
Z(\alpha)=\sum_{n=1}^{\infty}n^{\alpha}-\int_{1}^{\infty}x^{\alpha}\mathrm{d}x=\zeta(-\alpha)+\frac{1}{\alpha+1},\quad\mathrm{Re}(\alpha)<-1,
\]
where $\zeta(s)$ is the Riemann zeta function.

Take $g(x)=\sum_{n=1}^{\infty}\delta(x-n)-H(x)$, then $g(x)\in\mathcal{K}^{\prime}
\left\{  x^{\alpha_{n}}\right\}  $. Considering $\left\langle \sum
_{n=1}^{\infty}\delta(x-n)-H(x),x^{\alpha}\right\rangle $, we have
\[
\begin{aligned} \left\langle \sum_{n =1}^{\infty} \delta(x - n) - H(x), x^{\alpha} \right\rangle &= \left\langle \sum_{n =1}^{\infty} \delta(x - n) - H(x - 1) - [H(x) - H(x - 1)], x^{\alpha} \right\rangle \\ &= Z(\alpha) - \mathrm{F.p.} \int_{0}^{1} x^{\alpha}\mathrm{d}x \quad (C), \end{aligned}
\]
where $\mathrm{F.p.}\int_{0}^{1}x^{\alpha}\mathrm{d}x=\frac{1}{\alpha+1}$ for $\alpha\neq -1$.

By combining with (\ref{2.4}), we obtain
\begin{equation}
\label{2.5}\sum_{n=1}^{\infty} n^{\alpha} - \mathrm{F.p.} \int_{0}^{\infty}
x^{\alpha} \mathrm{d}x = \zeta(-\alpha) \quad(C).
\end{equation}

Note that the above formula holds under the condition $\mathrm{Re}(\alpha) <
-1 $. However, the zeta function has an analytic continuation in the complex
plane except at $\{1\}$, and the result of the Ces\`aro limit is also a smooth
extension on the real line. Due to the uniqueness of analytic continuation,
formula (\ref{2.5}) holds for any $\alpha\neq-1 $. According to Definition
\ref{def6}, equation (\ref{2.5}) can be rewritten as
\begin{equation}
\label{2.6}\lim_{x \to\infty} \left(  \sum_{n=1}^{[x]}
n^{\alpha} - \mathrm{F.p.} \int_{0}^{x} t^{\alpha} \mathrm{d}t \right)  =
\zeta(-\alpha) \quad(C),
\end{equation}
where $[x]$ is the floor function of $x$.

Note that $\mathrm{F.p.}$ means the Hadamard finite part. Let us briefly
review some basic concepts of the Hadamard finite part.

\subsection{Hadamard finite part}

\begin{definition}
\cite{BL,Estrada,Yang} Let $g(\varepsilon)$ be a function defined on $(0,
\eta)$ and $\lim_{\varepsilon\to0^{+}} g(\varepsilon) = \infty$. $\mathcal{F}
$ is a function family, where all elements of $\mathcal{F}$ are strictly
positive functions tending to infinity at $0$. For distinct $f_{1}, f_{2}
\in\mathcal{F}$, $\lim_{\varepsilon\to0^{+}} \frac{f_{1}(\varepsilon)}%
{f_{2}(\varepsilon)} = 0$ or $\infty$. If $g(\varepsilon) = g_{1}(\varepsilon)
+ g_{2}(\varepsilon)$, where $g_{1}$ can be expressed as a linear combination
of functions in $\mathcal{F}$ and $g_{2}$ satisfies that $\lim_{\varepsilon
\to0^{+}} g_{2}(\varepsilon)=A$ is finite, then $g_{1}$ is defined as the
infinite part of $g(\varepsilon)$, and $g_{2}$ is defined as the finite part
of $g(\varepsilon)$. Such a decomposition is unique because any finite number
of elements in $\mathcal{F}$ are linearly independent. Thus we can define the
finite part of $\lim_{\varepsilon\to0^{+}} g(\varepsilon)$ with respect to
$\mathcal{F}$ as $A$, and denote it by:
\begin{equation}
\label{2.7}\mathrm{F}.\mathrm{p}._{\mathcal{F}}\lim_{\varepsilon\rightarrow
0^{+}} g\left(  \varepsilon\right)  =A.
\end{equation}

\end{definition}

In the standard Hadamard finite part, take elements in $\mathcal{F}$ as
$\varepsilon^{-\alpha} |\ln\varepsilon|^{\beta}$, where $\alpha> 0$ and
$\beta\geq0$, or $\alpha= 0$, $\beta\geq0$. At this case, (\ref{2.7}) is
abbreviated as:
\begin{equation}
\mathrm{F}.\mathrm{p}.\lim_{\varepsilon\rightarrow0^{+}} g\left(
\varepsilon\right)  =A.
\end{equation}

It is obvious that the standard Hadamard finite part integral is a
generalization of the ordinary integral. For example:
\[
\int_{0}^{1} x^{\alpha}\,\mathrm{d}x = \mathrm{F}.\mathrm{p}.\lim
_{\varepsilon\to0^{+}} \int_{\varepsilon}^{1} x^{\alpha}\,\mathrm{d}x =
\mathrm{F}.\mathrm{p}.\lim_{\varepsilon\to0^{+}} \frac{1 - \varepsilon
^{\alpha+ 1}}{\alpha+ 1}= \frac{1}{\alpha+ 1},\ (\alpha\neq-1)
\]
and
\[
\int_{0}^{1} x^{-1} \,\mathrm{d}x =\mathrm{F}.\mathrm{p}.\lim_{\varepsilon
\to0^{+}} \int_{\varepsilon}^{1} x^{-1} \,\mathrm{d}x =\mathrm{F}%
.\mathrm{p}.\lim_{\varepsilon\to0^{+}} (\ln1 - \ln\varepsilon) = 0.
\]
It is known that when $\alpha> -1$, the value of the ordinary integral
$\int_{0}^{1} x^{\alpha}\,\mathrm{d}x$ is also $\frac{1}{\alpha+ 1}$.
Therefore, $\mathrm{F}.\mathrm{p}.\int_{0}^{1} x^{\alpha}\,\mathrm{d}x$
defines a smooth extension of the function $\int_{0}^{1} x^{\alpha
}\,\mathrm{d}x$ except at $\{-1\} $.

\section{The value of the Riemann zeta function}

\label{sec3}

\subsection{Evaluation of the zeta function}

Let us compute two toy examples.

\begin{example}
Evaluate $\zeta(0)$.\newline Substituting $\alpha=0$ in (\ref{2.6}), the
left-hand side becomes $\lim_{x\to\infty}([x]-x)$. Noting that $[x]-x$ is a
periodic function with period $1$, we apply Lemma \ref{lemma} to calculate:
\[
\lim_{x \to\infty} ([x] - x)= \int_{0}^{1}([x]-x)\mathrm{d}x = -\frac{1}{2}
\quad(C).
\]
Thus we obtain $\zeta(0)=-\frac{1}{2}$.
\end{example}

\begin{example}
Evaluate $\zeta(-1)$.\newline Similarly, substitute $\alpha=1$ into
(\ref{2.6}). Let $f(x)=\sum_{n = 1}^{[x]}n^{1}-\mathrm{F.p.}\int_{0}^{x}%
t^{1}\mathrm{d}t=\frac{[x]^{2} + [x]-x^{2}}{2}$, and let $F_{n}$ be its $n$-th order
primitive function. Then
\[
\begin{aligned} F_1(x)&=\int_{0}^{[x]}f(t)\mathrm{d}t+\int_{[x]}^{x}f(t)\mathrm{d}t\\ 
	&=\sum_{k = 0}^{[x]-1}\int_{k}^{k + 1}f(t)\mathrm{d}t+\int_{[x]}^{x}f(t)\mathrm{d}t\\
	&=\frac{1}{2}\left(([x]^2 + [x])(x - [x])-\frac{x^3- [x]^3}{3}-\frac{[x]}{3}\right). \end{aligned}
\]
Let $x = [x]+y:=n + y$ and write
\[
F_{1}(x)=\frac{1}{2}\left(  n( y- y^{2}-\frac{1}{3})
-\frac{y^{3}}{3}\right)  .
\]
The ordinary limit of $\frac{F_{1}(x)}{x}$ still does not exist. Hence we compute
\[
\begin{aligned} F_2(x) & = \sum_{k=0}^{[x]-1} \int_k^{k+1} F_1(t)\mathrm{d}t + \int_{[x]}^x F_1(t)\mathrm{d}t \\
	 &= -\frac{n^2}{24} + \frac{1}{2} \int_n^{n+y} \left( (n^2 + n)(t - n) - \frac{t^3 - n^3}{3} - \frac{n}{3} \right) \mathrm{d}t \\ &= -\frac{n^2}{24} + \frac{1}{2} \left( n( \frac{y^2}{2} - \frac{y^3}{3} - \frac{y}{3}) - \frac{y^4}{12} \right)
	. \end{aligned}
\]
Since $\lim_{x \to\infty} \frac{n}{x} = \lim_{x \to\infty} \frac{[x]}{x} = 1$,
then $\lim_{x \to\infty} \frac{2! F_{2}(x)}{x^{2}} = -\frac{1}{12}$. It
follows from Proposition \ref{prop2} that $\lim_{x \to\infty} f(x) = -\frac
{1}{12} \quad(C)$. Thus we obtain $\zeta(-1)=-\frac{1}{12}$.
\end{example}

Let us now use Lemma \ref{lemma} to derive a general formula for
$\zeta(-n)$.

\begin{theorem}
\label{Thm} We have the values of the zeta function at negative integers:
\[
\zeta(-n)=-\frac{B_{n+1}}{n+1},
\]
where $B_{n}$ are the Bernoulli numbers.
\end{theorem}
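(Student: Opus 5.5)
We substitute $\alpha=n$ (a positive integer) into (\ref{2.6}), so that $\zeta(-n)$ is the Ces\`aro limit of
\[
f(x)=\sum_{k=1}^{[x]}k^{n}-\frac{x^{n+1}}{n+1}.
\]
The Hadamard finite part is just the ordinary integral here, since $n>-1$. The plan is to rewrite $f$ using Bernoulli polynomials, so that it becomes a finite sum of powers of $x$ multiplied by periodic functions. Lemma \ref{lemma} can then be applied term by term, exactly as in the example computing $\zeta(0)$.

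First, Faulhaber's formula in Bernoulli-polynomial form gives
\[
\sum_{k=0}^{m-1}k^{n}=\frac{B_{n+1}(m)-B_{n+1}}{n+1}.
\]
Taking $m=[x]+1$ yields
\[
\sum_{k=1}^{[x]}k^{n}=\frac{B_{n+1}([x]+1)-B_{n+1}}{n+1}.
\]
Write $x=[x]+y$ with $y=\{x\}\in[0,1)$, so that $[x]+1=x+(1-y)$. The Taylor (Appell) expansion
\[
B_{n+1}(x+h)=\sum_{j=0}^{n+1}\binom{n+1}{j}B_{j}(h)\,x^{n+1-j}
\]
then gives
\[
f(x)=-\frac{B_{n+1}}{n+1}+\frac{1}{n+1}\sum_{j=1}^{n+1}\binom{n+1}{j}B_{j}(1-\{x\})\,x^{n+1-j}.
\]
The $j=0$ term, $x^{n+1}/(n+1)$, cancels exactly against the integral.

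It therefore remains to show that each product $p_j(x)\,x^{m}$ has Ces\`aro limit $0$, where $p_j(x)=B_j(1-\{x\})$ is periodic of period $1$ and $m=n+1-j\ge 0$. The periodic mean of $p_j$ is $\int_0^1B_j(t)\,\mathrm{d}t=0$ for $j\ge1$. Lemma \ref{lemma} then gives $p_j(x)=o(x^{-\infty})\ (C)$. Proposition \ref{prop1}(3) gives $x^{m}p_j(x)=o(x^{m+\beta})\ (C)$ for every $\beta$ with $m+\beta\notin\{-1,-2,\dots\}$. Choosing $\beta<-m$ non-integral, we get $x^mp_j(x)=o(1)\ (C)$, so its Ces\`aro limit is $0$.

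Since Ces\`aro limits are linear, $\lim_{x\to\infty}f(x)=-\frac{B_{n+1}}{n+1}\ (C)$, and (\ref{2.6}) then yields $\zeta(-n)=-\frac{B_{n+1}}{n+1}$. As a check, $n=1$ gives $-B_2/2=-1/12$, matching the example computing $\zeta(-1)$.

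The step needing the most care is this last one: justifying that multiplying an $o(x^{-\infty})\ (C)$ periodic function by a polynomial weight $x^m$ still leaves something with zero Ces\`aro limit. If one prefers not to rely on Proposition \ref{prop1}(3) as stated, the fallback is to argue directly. Repeated primitives of a mean-zero periodic function, after subtracting constants, stay periodic and bounded. Integrating by parts $m+1$ times then shows that the $N$-th primitive of $x^mp_j(x)$ is a polynomial of degree $<N$ plus $O(x^{m})$, which is $o(x^N)$ for $N>m$. The polynomial part is controlled because its degree is below $N$.
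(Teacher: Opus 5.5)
Your proposal is correct and follows essentially the same route as the paper. Both arguments use Faulhaber's formula, expand $f$ into powers $x^m$ with $1$-periodic coefficients, check that every nonconstant coefficient has zero periodic mean, apply Lemma~\ref{lemma}, and read off the constant. The paper's coefficient of $x^m$ is exactly your $\frac{1}{n+1}\binom{n+1}{m}B_{n+1-m}(1-\{x\})$ for $1\le m\le n$, and its $P_0(\{x\})$ is $\frac{1}{n+1}\bigl(B_{n+1}(1-\{x\})-B_{n+1}\bigr)$. So your Bernoulli-polynomial packaging replaces the paper's binomial and recurrence computations with the single fact $\int_0^1 B_j(t)\,\mathrm{d}t=0$, and you also spell out the step of multiplying an $o(x^{-\infty})\ (C)$ term by $x^m$, which the paper attributes only to ``the properties of the Ces\`aro limit''.
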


Before proving this theorem, let us recall the Bernoulli numbers. The
Bernoulli numbers $\{B_{n}\}_{n=0}^{\infty}$ are defined by the expansion of
the analytic function $\frac{z}{e^{z}-1}$:
\[
\frac{z}{e^{z}-1}=\sum_{n=0}^{\infty}{B_{n}\frac{z^{n}}{n!}}.
\]
The initial terms are $B_{0}=1, B_{1}=-\frac{1}{2}, B_{2}=\frac{1}{6},
B_{3}=0, B_{4}=-\frac{1}{30}, B_{5}=0\cdots$ It is easy to prove that the
Bernoulli numbers $\{B_{n}\}_{n=0}^{\infty}$ satisfies the recurrence
relation: $B_{0}=1$ and
\begin{equation}
\label{re1}\sum_{k=0}^{n}{\binom{n+1}{k} B_{k}}=0, n\geqslant1
\end{equation}
or
\begin{equation}
\label{re2}B_{n}=-\frac{1}{n+1}\sum_{k=0}^{n-1}{\binom{n+1}{k}B_{k}},
n\geqslant1.
\end{equation}
The famous Faulhaber--Bernoulli formula gives the expression for the sum of
powers of natural numbers using Bernoulli numbers:
\begin{equation}
\label{F-B}\sum_{k=1}^{m-1}{k^{n}}=\frac{1}{n+1}\sum_{k=0}^{n}{\binom{n+1}%
{k}B_{k}m^{n-k+1}}, n\geqslant1.
\end{equation}
We now present a new proof of Theorem \ref{Thm}:

\begin{proof}
Denote
\[
f(x)=\sum_{k=1}^{\left[  x \right]  }{k^{n}}-\mathrm{F.p.}\int_{0}^{x}%
{t^{n}\mathrm{d}t}.
\]
From (\ref{2.6}), we need to calculate $\lim_{x \to\infty}f(x)\quad(C)$.

Using Faulhaber--Bernoulli formula (\ref{F-B}), we have
\[
\begin{aligned} f(x) &=\frac{1}{n+1}\sum_{k=0}^n{\binom{n+1}{k} B_k\left( \left[ x \right] +1 \right) ^{n-k+1}}-\frac{x^{n+1}}{n+1}\\ &=\frac{1}{n+1}\sum_{k=0}^n{\binom{n+1}{k} B_k\left( x-\left\{ x \right\}+1 \right) ^{n-k+1}}-\frac{x^{n+1}}{n+1}\\ &:=\sum_{m=0}^n{P_m\left( \left\{ x \right\} \right) x^m} \end{aligned}
\]
where $\{x\}=x-[x]$ and $P_{m}$ is a polynomial. Hence all $P_{m}(\{x\})$ are
functions with period $1$. In fact, we have
\[
P_{0}\left(  \left\{  x \right\}  \right)  =\frac{1}{n+1}\sum_{k=0}^{n}%
{\binom{n+1}{k} B_{k}\left(  1-\left\{  x \right\}  \right)  ^{n-k+1}}%
\]
and
\[
P_{m}\left(  \left\{  x \right\}  \right)  =\sum_{k=0}^{n-m+1}{\binom{n+1}{k}
\binom{n-k+1}{m} B_{k}\left(  1-\left\{  x \right\}  \right)  ^{n-k-m+1}},
1\leqslant m\leqslant n.
\]
When $1\leqslant m\leqslant n$, the periodic mean value of $P_{m}{(x)}$ is
\[
\begin{aligned} \int_0^1{P_m\left( \left\{ x \right\} \right) \mathrm{d}x} &=\sum_{k=0}^{n-m+1}{\binom{n+1}{k}\binom{n-k+1}{m} \frac{B_k}{n-k-m+2}}\\ &=\frac{1}{n+2}\binom{n+2}{m} \sum_{k=0}^{n-m+1}{\binom{n-m+2}{k} B_k}\\ &\xlongequal{(\ref{re1})}0. \end{aligned}
\]
It follows from Lemma \ref{lemma} that $P_{m}({x})=o(x^{-\infty})$ for
$1\leqslant m\leqslant n$. According to the properties of the Ces\`aro limit,
the Ces\`aro limit of $f(x)$ as $x \to\infty$ equals
\[
\begin{aligned} \int_0^1{P_0\left( \left\{ x \right\} \right) \mathrm{d}x} &=\frac{1}{n+1}\sum_{k=0}^n{\binom{n+1}{k} \frac{B_k}{n-k+2}}\\ & =\frac{1}{\left( n+1 \right) \left( n+2 \right)}\sum_{k=0}^n{\binom{n+2}{k} B_k}\\ &\xlongequal{(\ref{re2})}-\frac{B_{n+1}}{n+1}. \end{aligned}
\]
Thus we obtain $\zeta(-n)=-\frac{B_{n+1}}{n+1}$, where $n$ is a positive integer.
\end{proof}

\subsection{Some discussions about $\zeta^{\prime}(\alpha)$}

The same method can be used to construct the analytic continuation of the
derivative of the Riemann zeta function with only a modification of
$\phi_{\alpha}$. Still, let $f(x) = \sum_{n=1}^{\infty} \delta(x - n) - H(x -
1) $ and define $\phi_{\alpha}= \phi_{0} \cdot\ln x \cdot x^{\alpha}$. For any
$\mathrm{Re}(\alpha) < -1$, we have
\[
\left\langle \sum_{n=1}^{\infty} \delta(x - n) - H(x - 1), \phi_{\alpha}
\right\rangle = Z(\alpha) \quad(C),
\]
where $Z(\alpha) = -\zeta^{\prime}(-\alpha) - \frac{1}{(1 + \alpha)^{2}}$.\\
Taking another function $f(x)=\sum_{n=1}^{\infty} \delta(x - n) - H(x)$, we
also have
\[
\left\langle \sum_{n=1}^{\infty} \delta(x - n) - H(x), \ln x \cdot x^{\alpha}
\right\rangle = Z(\alpha) - \mathrm{F.p.} \int_{0}^{1} \ln x \cdot x^{\alpha}
\mathrm{d} x \quad(C),
\]
and noting that
\[
\mathrm{F.p.} \int_{0}^{1} \ln x \cdot x^{\alpha} \mathrm{d}x = \mathrm{F.p.}
\lim_{A \to0} \int_{A}^{1} \ln x \cdot x^{\alpha} \mathrm{d}x = -\frac
{1}{(\alpha+ 1)^{2}},
\]
we conclude:
\begin{equation}
\label{3.4}-\zeta^{\prime}(-\alpha)  = \lim_{x \to\infty} \left(
\sum_{n=1}^{[x]} \ln n \cdot n^{\alpha} - \mathrm{F.p.} \int_{0}^{x} \ln t
\cdot t^{\alpha} \mathrm{d}t \right)  \quad(C).
\end{equation}

We can compute the value of $\zeta^{\prime}(0)$ from (\ref{3.4}):

\begin{example}
Evaluate $\zeta^{\prime}(0)$.\newline Take $\alpha=0$ in (\ref{3.4}), we
obtain $\zeta^{\prime}(0)=-\lim_{x\rightarrow\infty}\left(  \sum
_{n=1}^{[x]}\ln n-\mathrm{F.p.}\int_{0}^{x}\ln t\mathrm{d}t\right)  \quad(C)$. For
simplicity, define
\[
f(x)=\mathrm{F.p.}\int_{0}^{x}\ln t\mathrm{d}t-\sum_{n=1}^{[x]}\ln n=x\ln
x-x-\ln([x]!)=\ln\left(  \frac{x^{x}}{e^{x}\cdot\lbrack x]!}\right)  .
\]
By setting $x=[x]+y:=n+y$, and applying Stirling's formula, we have
\[
f(x)=\ln\left(  \frac{x^{x}}{e^{x}\cdot n!}\right)  \sim\ln\frac{x^{x}}%
{e^{x}\cdot\sqrt{2\pi n}\left(  \frac{n}{e}\right)  ^{n}}=\ln\left(
\frac{\left(  1+\frac{y}{n}\right)  ^{n}x^{y}}{e^{y}\cdot\sqrt{2\pi n}%
}\right)  \sim\ln\left(  \frac{x^{y}}{\sqrt{2\pi n}}\right)
\]
as $x\rightarrow\infty$. Define $g(x)=\ln\left(  \frac{x^{y}}{\sqrt{2\pi n}%
}\right)  $, then $f(x)\sim g(x)$ as $x\rightarrow\infty$ and
\[
g(x)=-\frac{1}{2}\ln(2\pi)+\left(  x-[x]-\frac{1}{2}\right)  \ln x+\frac{1}%
{2}\ln\frac{x}{[x]}.
\]
Note that $x-[x]-\frac{1}{2}$ is a periodic function with zero mean value,
and $\ln x=o(x)$ as $x\rightarrow\infty$. It follows from Lemma \ref{lemma} that
\[
\left(  x-[x]-\frac{1}{2}\right)  \ln x=o(x^{-\infty})\quad(C),x\rightarrow
\infty.
\]
Furthermore, since $\lim_{x\rightarrow\infty}\ln\frac{x}{[x]}=0$, we have
$g(x)=-\frac{1}{2}\ln(2\pi)+o(1)\quad(C),x\rightarrow\infty$. Recalling that $f(x) \sim g(x)$ as  $x\rightarrow\infty$, we obtain $\lim_{x \to \infty} f(x) = -\frac{1}{2} \ln(2\pi)\quad (C)$. Substituting into the expression for $\zeta'(0)$ yields $\zeta'(0) = -\frac{1}{2} \ln(2\pi)$.
\end{example}

\section*{Acknowledgements}

Thanks to the National Natural Science Foundation of China to support this
research. The grant number is 12001150.

\bibliographystyle{plain}
\bibliography{reference}

\end{document}